%
%
%
\documentclass{amsproc}

\def\R{{{\mathbb R}}}
\def\N{{{\mathbb N}}}
\newcommand{\DD}{{D^{(1)}}}

\newtheorem{theorem}{Theorem}[section]
\newtheorem{lemma}[theorem]{Lemma}
\newtheorem{proposition}[theorem]{Proposition}

\theoremstyle{definition}
\newtheorem{definition}[theorem]{Definition}

\theoremstyle{remark}

\numberwithin{equation}{section}



\begin{document}

\title[Removability of exceptional sets]{Removability of exceptional sets for differentiable and Lipschitz functions}

\author{J. Craig}
\address{School of Mathematical Sciences, The University of Nottingham,
University Park, Nottingham NG7 2RD, UK}
\curraddr{KPMG LLP, 8 Salisbury Square, London EC4Y 8BB, UK}
\email{James.Craig@kpmg.co.uk}
\thanks{The first author was supported by Nuffield Foundation grant reference number URB/40614.}

\author{J. F. Feinstein}
\address{School of Mathematical Sciences, The University of Nottingham,
University Park, Nottingham NG7 2RD, UK}
\email{Joel.Feinstein@nottingham.ac.uk}

\author{P. Patrick}
\address{School of Mathematical Sciences, The University of Nottingham,
University Park, Nottingham NG7 2RD, UK}
\email{pt.patrick@outlook.com}
\thanks{The third author was supported by a joint London Mathematical Society/Nuffield Foundation grant, reference number URB13/42.}

\subjclass{Primary 26A27,26A16,26A24; Secondary 46J15, 46J10, 46J05}
\date{August 2014}


\keywords{Removability, differentiable, Lipschitz}

\begin{abstract}
We discuss removability problems concerning differentiability and pointwise Lipschitz conditions for functions of a real variable.
We prove that, in each of the settings under consideration, a set is removable if and only if it has no perfect subsets.
\end{abstract}

\maketitle

\section{Introduction}
In this note we shall look at some problems concerning real-valued functions on intervals in $\R$.

Consider the following naive question. Suppose that a function $f:\R \to \R$ is differentiable at (at least) all points outside some \lq small' exceptional set.
Does it follow that the function must be differentiable?
The answer here is obviously negative, because of functions such as $f(x)=|x|$.
This function $f$ is continuous, is differentiable on $\R\setminus \{0\}$, and even has bounded derivative there.
So what conditions should we impose on the function $f$ in order to obtain useful \lq removability' results?

\smallskip

Our starting point is the following well-known result, which may be proved (for example) using L'H\^opital's rule.

\begin{proposition}[Folk theorem of real analysis]
Let $f$ and $g$ be continuous functions from $\R$ to $\R$, and  let $S$ be the set of all points $x\in \R$ such that $f$ is differentiable at $x$ and $f'(x)=g(x)$. Set $E=\R \setminus S$. Then $E$ has no isolated points.
In particular, if $E$ is non-empty, then $E$ must have infinitely many points.
\end{proposition}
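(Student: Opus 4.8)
The plan is to argue by contradiction on a single point. Suppose $x_0$ is an isolated point of $E$; I will show that in fact $x_0 \in S$, contradicting $x_0 \in E$. If $x_0$ is isolated in $E$, then there is some $\delta > 0$ with $(x_0 - \delta, x_0 + \delta) \cap E = \{x_0\}$. Equivalently, every point of the punctured interval $(x_0 - \delta, x_0) \cup (x_0, x_0 + \delta)$ lies in $S$, so on this punctured interval $f$ is differentiable and $f'(x) = g(x)$. The goal then reduces to proving that $f$ is differentiable at $x_0$ with $f'(x_0) = g(x_0)$.

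I would treat the right-hand derivative first. Consider $\frac{f(x) - f(x_0)}{x - x_0}$ for $x \in (x_0, x_0 + \delta)$. Since $f$ is continuous, the numerator tends to $0$ as $x \to x_0^+$, as does the denominator; on $(x_0, x_0 + \delta)$ the numerator is differentiable with derivative $g(x)$ and the denominator has nonvanishing derivative $1$. By continuity of $g$, the ratio of derivatives tends to $g(x_0)$. The one-sided L'H\^opital rule then gives $\lim_{x \to x_0^+} \frac{f(x) - f(x_0)}{x - x_0} = g(x_0)$. The identical computation on $(x_0 - \delta, x_0)$ yields the matching left-hand limit, so the two-sided limit exists and equals $g(x_0)$; that is, $f'(x_0) = g(x_0)$, so $x_0 \in S$, the desired contradiction. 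Hence $E$ has no isolated points. I would note in passing that one can bypass L'H\^opital via the Mean Value Theorem: for $x \in (x_0, x_0 + \delta)$ there is $c_x$ strictly between $x_0$ and $x$ with $\frac{f(x) - f(x_0)}{x - x_0} = f'(c_x) = g(c_x)$, and $c_x \to x_0$ forces $g(c_x) \to g(x_0)$ by continuity; this localizes all the work into one clean step.

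For the final sentence, I would observe that any non-empty finite subset of $\R$ has an isolated point (indeed, each of its points is isolated). Since $E$ has no isolated points, a non-empty $E$ cannot be finite, so it has infinitely many points.

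As for the main obstacle: there is no deep difficulty here. The only thing requiring genuine care is the correct statement and verification of the hypotheses of the one-sided L'H\^opital rule, namely that both functions tend to $0$, that the denominator has nonvanishing derivative, and that the limit of the ratio of derivatives actually exists. This last point is exactly where the continuity of $g$ (and of $f$) is used, and it is why I would prefer the Mean Value Theorem route, which makes the role of continuity transparent and sidesteps the hypothesis-checking entirely.
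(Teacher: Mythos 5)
Your proof is correct and follows exactly the route the paper indicates: the paper offers no written-out proof, only the remark that the result ``may be proved (for example) using L'H\^opital's rule,'' and your argument is precisely that sketch carried out in full (isolate $x_0$, apply one-sided L'H\^opital on each side using continuity of $f$ and $g$, then note that a non-empty set without isolated points is infinite). Your Mean Value Theorem variant is an equally valid and arguably cleaner way to finish the same localization step.
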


Another way of thinking about this is that finite exceptional sets are removable in this setting.

\smallskip
Note that it is clear that we can use the Fundamental Theorem of Calculus to reduce our original problem to the case where $g$ is the zero function. Thus we regard a set $E \subseteq \R$ as removable for this problem if the following condition holds: whenever $f$ is a continuous function from $\R$ to $\R$ such that $f$ (is differentiable at and) has derivative $0$ at all points of $\R \setminus E$, then $f$ must be constant on $\R$.

We follow the convention that perfect sets must be non-empty. Thus a subset $A$ of $\R$ is a \emph{perfect set} if $A$ is non-empty and closed, and $A$ has no isolated points.
We shall show that a subset $E$ of $\R$ is removable in the setting above if and only if $E$ has no perfect subsets.
In order to prove this, we shall solve a slightly more general problem concerning pointwise Lipschitz constants for functions. These constants will be discussed in the next section.

It is standard that every perfect subset of $\R$ contains a homeomorphic copy of the usual Cantor middle-thirds set. Thus the fact that perfect sets are not removable for our problem is fairly obvious, using variants of the usual Cantor function.

Note that, assuming the axiom of choice, there are uncountable subsets $E$ of $\R$ such that $E$ has no perfect subsets (see, for example, \cite[Exercise 2C.4]{Mo}). However such sets $E$ cannot be Borel sets, or even analytic sets (\cite[Corollary 2C.3]{Mo}). We are grateful to Imre Leader for clarifying these issues for us, and to Adrian Mathias for suggesting the book \cite{Mo} of Moschovakis as a reference. In our setting this tells us that there are some uncountable removable sets, but that these are not Borel sets. As the exceptional sets we consider below are Borel sets, it would be sufficient for us to prove that countable sets are removable in our setting. However, we shall instead prove directly that non-empty exceptional sets must contain a copy of the Cantor set.

In other settings, removability problems often have an associated notion of capacity, where the removable sets are those whose capacity is zero.
See, for example, \cite{Tolsa} for a discussion of analytic capacity and the removability problem for bounded analytic functions.
See also \cite{O'Farrell} for O'Farrell's powerful 1-reduction technique concerning capacities and removability problems.

\section{Pointwise Lipschitz constants}
We now discuss the pointwise Lipschitz constants for real-valued functions on metric spaces.

\begin{definition}
Let $(X,d)$ be a non-empty metric space, let $f:X\to\R$, and let $x_0 \in X$. The \textit{pointwise Lipschitz constant of $f$ at $x_0$}, $L(f,x_0) \in [0,\infty]$, is defined as follows.
If $x_0$ is not isolated in $X$, then we define
\[
L(f,x_0)=\limsup_{x \to x_0} \frac{|f(x)-f(x_0)|}{d_X(x,x_0)}\,.
\]
If $x_0$ is isolated in $X$, then we define $L(f,x_0)=0$.
\end{definition}

For more on the background and some recent applications of pointwise Lipschitz constants, we refer the reader to \cite{DC-Jaramillo}. In that paper,  Durand-Cartagena and Jaramillo look at the \textit{pointwise Lipschitz functions} on $X$: these are the real-valued functions whose pointwise Lipschitz constants are bounded on $X$. The spaces of pointwise Lipschitz functions they discuss have many features in common with the normed algebras $\DD(X)$ (for perfect, compact plane sets $X$) discussed by Dales and Davie \cite{Dales-Davie} (see also \cite{Bland-Feinstein, Dales-Feinstein}).

In this note, we shall restrict our attention to the case where the metric space $X$ is a non-degenerate interval in $\R$, with the usual metric as a subset of $\R$. Here the issue of isolated points will not arise. Where, for some function $h$, we discuss $\limsup_{x\to x_0} h(x)$ below, we may use the usual two-sided $\limsup$ when $x_0$ is in the interior of the interval. We use the appropriate one-sided $\limsup$ if $x_0$ is an endpoint of the interval. Where relevant we also work with one-sided derivatives at end-points.

For the rest of this note, all intervals and subintervals considered are assumed to be non-degenerate intervals in $\R$.
Let $J$ be an interval, let $f:J\to\R$, and
let $x_0 \in J$. With the above conventions, we have

\[
L(f,x_0)=\limsup_{x \to x_0} \frac{|f(x)-f(x_0)|}{|x-x_0|}\,.
\]

Note that $L(f,x_0)=0$ if and only if $f$ is differentiable at $x_0$ with $f'(x_0)=0$.
More generally, if $f$ is differentiable at $x_0$, then $L(f,x_0)=|f'(x_0)|$.

\smallskip

The next result is presumably well known. It can be proved by (for example) an easy repeated bisection argument. It can also be obtained as an elementary special case of  \cite[Lemma 2.3]{DC-Jaramillo}.

\begin{proposition}\label{Lipschitz}
Let $J$ be an interval, let $f:J\to \R$, and let $C \geq 0$. Then the following statements are equivalent.
\begin{enumerate}
\item[(a)] For all $x \in J$, we have $L(f,x)\leq C$.
\item[(b)] For all $x,y \in J$ we have
\[
|f(y)-f(x)| \leq C |y-x|\,,
\]
i.e., $f$ is Lipschitz continuous, with Lipschitz seminorm at most $C$.
\end{enumerate}
\end{proposition}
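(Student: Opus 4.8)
The plan is to prove the two implications separately, with essentially all of the content lying in (a) $\Rightarrow$ (b).

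The implication (b) $\Rightarrow$ (a) is immediate. Fix $x_0 \in J$. For every $x \in J$ with $x \neq x_0$, hypothesis (b) gives $|f(x)-f(x_0)|/|x-x_0| \leq C$, and hence the $\limsup$ defining $L(f,x_0)$ is at most $C$.

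For (a) $\Rightarrow$ (b) I would argue by contradiction using repeated bisection. Suppose (b) fails, so there exist $a<b$ in $J$ and $\epsilon > 0$ with $|f(b)-f(a)| \geq (C+\epsilon)(b-a)$. Bisecting $[a,b]$ at its midpoint $m$, the triangle inequality gives $|f(b)-f(a)| \leq |f(b)-f(m)| + |f(m)-f(a)|$, and since the two halves each have length $(b-a)/2$, at least one half $[a_1,b_1]$ must satisfy the same excess inequality $|f(b_1)-f(a_1)| \geq (C+\epsilon)(b_1-a_1)$. Iterating, I obtain a nested sequence of closed subintervals $[a_n,b_n]$ of $J$, with $b_n - a_n = (b-a)/2^n \to 0$ and $|f(b_n)-f(a_n)| \geq (C+\epsilon)(b_n-a_n)$ for every $n$. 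By completeness of $\R$ the intersection $\bigcap_n [a_n,b_n]$ is a single point $x_0 \in J$, and $a_n, b_n \to x_0$.

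Now I would invoke the hypothesis $L(f,x_0) \leq C$. Taking $\eta = \epsilon/2$, the definition of the $\limsup$ supplies $r>0$ such that $|f(x)-f(x_0)| \leq (C + \epsilon/2)|x-x_0|$ whenever $x \in J$ and $|x-x_0| < r$. Choosing $n$ large enough that both $a_n$ and $b_n$ lie within distance $r$ of $x_0$, the key geometric observation is that $x_0 \in [a_n,b_n]$, so $a_n \leq x_0 \leq b_n$ and therefore $(x_0 - a_n) + (b_n - x_0) = b_n - a_n$ exactly. Combining the triangle inequality with the local bound yields
\[
|f(b_n)-f(a_n)| \leq |f(b_n)-f(x_0)| + |f(x_0)-f(a_n)| \leq (C+\epsilon/2)(b_n - a_n),
\]
which contradicts $|f(b_n)-f(a_n)| \geq (C+\epsilon)(b_n-a_n)$ since $b_n - a_n > 0$. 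This contradiction establishes (b).

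The step I expect to require the most care is verifying that the bisection genuinely preserves the \emph{same} excess constant $C+\epsilon$ at each stage, rather than some progressively weaker bound. The point that makes the argument close cleanly is that the limit point $x_0$ lies inside every interval of the nested sequence, so the two one-sided distances from $x_0$ to the endpoints sum to the full interval length; this is precisely what allows the purely local estimate coming from $L(f,x_0) \leq C$ to defeat the globally maintained excess.
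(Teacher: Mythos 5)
Your proof is correct, and it is precisely the ``easy repeated bisection argument'' that the paper invokes for Proposition \ref{Lipschitz} without writing out the details: the paper leaves the proof to the reader (or to \cite[Lemma 2.3]{DC-Jaramillo}), and your argument supplies exactly that missing bisection proof. The two key points --- that halving preserves the excess constant $C+\epsilon$, and that the limit point $x_0$ lying inside every $[a_n,b_n]$ lets the local bound $L(f,x_0)\leq C$ contradict the maintained excess --- are handled correctly.
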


We now investigate, for each $C \geq 0$, the appropriate removability problem associated with condition (a) of Proposition \ref{Lipschitz}.
We shall show that these problems all have the same answer (in terms of perfect sets). Here the special case where $C=0$ is simply our original removability problem for differentiability.
We still need to assume that our functions are continuous in the first place, as otherwise step functions will give us problems.

Let $J$ be an interval, let $f:J\to \R$ be continuous, and let $C\geq 0$.

Set
\[
E_C(f) = \{x \in J:L(f,x) > C\}\,,
\]
which we may think of as the \lq $C$-exceptional set' for $f$.

We shall use the notation $E_C(f)$ throughout the remainder of this note.

Using the continuity of $f$, along with Proposition \ref{Lipschitz}, it is elementary to see that $E_C(f)$ has no isolated points.
This immediately tells us that finite sets are removable in this setting.
As before, using variants of the Cantor function, it is easy to see that perfect sets are not removable here.

\smallskip

From now on, we assume that $J$ is an interval and that $f$ is a continuous real-valued function on $J$. Unless otherwise specified, the pointwise Lipschitz constants $L(f,x)$ and the $C$-exceptional sets $E_C(f)$ will be defined working on $J$. However, in case of ambiguity, we may use the notation $L(f|_I,x)$ and $E_C(f|_I)$ to specify that we are working, instead, with the restriction of $f$ to some subinterval $I$ of $J$.

\smallskip

The following elementary result is presumably well-known, though we have not found an explicit reference.

\begin{lemma}\label{nested}
Suppose that $x_0 \in J$ and that $[a_n,b_n]$ are subintervals of $J$ such that
\[
\bigcap_{n=1}^\infty [a_n,b_n]=\{x_0\}\,.
\]
For each $n \in \N$, set
\[
c_n=\frac{|f(b_n)-f(a_n)|}{b_n-a_n}\,.
\]
Then $L(f,x_0) \geq \limsup_{n\to\infty} c_n$.
\end{lemma}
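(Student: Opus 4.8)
The plan is to bound each $c_n$ above by a one-sided difference quotient of $f$ at $x_0$ and then pass to the limit superior. The elementary identity driving everything is that, when $a_n < x_0 < b_n$, one can route the increment $f(b_n)-f(a_n)$ through the value $f(x_0)$. Writing $f(b_n)-f(a_n)=\big(f(b_n)-f(x_0)\big)+\big(f(x_0)-f(a_n)\big)$ and splitting the denominator as $b_n-a_n=(b_n-x_0)+(x_0-a_n)$, the triangle inequality together with the mediant inequality (for $p,q\ge 0$ and $r,s>0$ one has $\frac{p+q}{r+s}\le\max\{\frac pr,\frac qs\}$, since $p\le r\max\{\frac pr,\frac qs\}$ and $q\le s\max\{\frac pr,\frac qs\}$) yields
\[
c_n\le\max\left\{\frac{|f(b_n)-f(x_0)|}{b_n-x_0},\,\frac{|f(a_n)-f(x_0)|}{x_0-a_n}\right\}.
\]
Each quantity on the right has the form $|f(y)-f(x_0)|/|y-x_0|$ for a point $y\in\{a_n,b_n\}$ lying strictly on one side of $x_0$, which is precisely the expression whose limit superior as $y\to x_0$ defines $L(f,x_0)$.

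Next I would record the convergence of the endpoints. Here I use that the intervals are nested, so that $(a_n)$ is non-decreasing and bounded above by $x_0$ while $(b_n)$ is non-increasing and bounded below by $x_0$; their limits $a^\ast\le x_0\le b^\ast$ then satisfy $[a^\ast,b^\ast]=\bigcap_n[a_n,b_n]=\{x_0\}$, forcing $a_n\to x_0$ and $b_n\to x_0$. The degenerate sub-cases cause no trouble: if $a_n=x_0$ then the interval being non-degenerate forces $b_n>x_0$ and $c_n=|f(b_n)-f(x_0)|/(b_n-x_0)$ is already a one-sided quotient at $x_0$, and symmetrically if $b_n=x_0$; in the remaining case $a_n<x_0<b_n$ the displayed bound applies with both denominators positive.

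Consequently, for every $n$ there is a point $y_n\in\{a_n,b_n\}$ with $y_n\ne x_0$, with $y_n\to x_0$, and with $c_n\le|f(y_n)-f(x_0)|/|y_n-x_0|$. Passing to a subsequence along which $c_n\to\limsup_n c_n$, the corresponding points $y_n$ still converge to $x_0$, so
\[
\limsup_{n\to\infty}c_n\le\limsup_{y\to x_0}\frac{|f(y)-f(x_0)|}{|y-x_0|}=L(f,x_0),
\]
which is the assertion (the one-sided $\limsup$ being used automatically when $x_0$ is an endpoint of $J$). I expect the only genuine subtlety to lie in this final passage to the limit superior: it is essential that $a_n$ and $b_n$ truly tend to $x_0$, so that the quotients attached to them are dominated by the defining $\limsup$ for $L(f,x_0)$. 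This is exactly where the nested (shrinking) character of the intervals is indispensable, since a family having the same intersection $\{x_0\}$ but with endpoints failing to converge to $x_0$ need not satisfy the conclusion.
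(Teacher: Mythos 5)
Your proof is correct and is essentially the paper's own argument: the paper's one-line proof rests precisely on your mediant-inequality step, namely that when $a_n<x_0<b_n$ at least one of the one-sided quotients $|f(b_n)-f(x_0)|/(b_n-x_0)$ or $|f(x_0)-f(a_n)|/(x_0-a_n)$ is at least $c_n$, with the remaining passage to the limit superior (which you spell out, including the degenerate endpoint cases) left as ``almost immediate''. Your closing observation is also a genuine point rather than a quibble: the hypothesis as literally stated does not force $a_n,b_n\to x_0$ (e.g.\ take $f(x)=x^2$ and interleave the constant sequence $[0,1]$ with the intervals $[-1/n,1/n]$; the intersection is $\{0\}$ and $\limsup_n c_n=1$, yet $L(f,0)=0$), so the nestedness you assume --- which does hold in the paper's sole application of this lemma in the main theorem --- is genuinely needed for the conclusion.
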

\begin{proof}
The is almost immediate from the definitions.
The only point worth noting is that, for each $n \in \N$, if $x \in (a_n,b_n)$, then we must have (at least one of)
$|f(b_n)-f(x_0)|/(b_n-x_0) \geq c_n$ or $|f(x_0)-f(a_n)|/(x_0-a_n) \geq c_n$.
\end{proof}

We need one final elementary lemma. We prove this result by contradiction, based on repeated bisection, and using the fact that the relevant $C$-exceptional set has no isolated points.

\begin{lemma}\label{pre-Cantor}
Let $C\geq 0$. Suppose that $I=[a,b]$ is a closed subinterval of $J$ such that
\[
|f(b)-f(a)|> C(b-a)\,.
\]
Then there exist two disjoint closed subintervals $[c_1,d_1]$ and $[c_2,d_2]$ of $I$, each of length at most $(b-a)/2$, such that
\[
|f(d_i)-f(c_i)|> C(d_i-c_i)\quad(i=1,2)\,.
\]
\end{lemma}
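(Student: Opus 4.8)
The plan is to argue constructively rather than by contradiction, producing the two subintervals by locating two distinct points of the exceptional set inside $I$ and then shrinking a short interval around each. The point to appreciate at the outset is why one bisection does not suffice: splitting $I=[a,b]$ at its midpoint $m$ gives, via $|f(b)-f(a)|\le|f(b)-f(m)|+|f(m)-f(a)|$, at least one half $[c,d]$ with $|f(d)-f(c)|>C(d-c)$, but the two halves $[a,m]$ and $[m,b]$ share the endpoint $m$ and so can never be disjoint. The disjointness must therefore be forced by a genuinely different input, namely that $E_C(f|_I)$ has no isolated points.

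First I would check that $E_C(f|_I)$ is non-empty. Applying Proposition \ref{Lipschitz} to $f|_I$ on $I$, condition (b) fails since $|f(b)-f(a)|>C(b-a)$, so condition (a) also fails and there is some $x_1\in I$ with $L(f|_I,x_1)>C$; that is, $x_1\in E_C(f|_I)$. (The same point can be produced by the bisection indicated above: choosing at each stage a half on which the increment strictly exceeds $C$ times the length and tracking the excess $|f(d)-f(c)|-C(d-c)$, which at worst halves under bisection, yields a nested sequence shrinking to a point $x_1$ whose difference quotients $c_n$ stay bounded below by some fixed $C+\delta$ with $\delta>0$; Lemma \ref{nested} then gives $L(f|_I,x_1)\ge C+\delta>C$.)

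Next I would use the fact, already recorded in the text, that $E_C(f|_I)$ has no isolated points. Since this set is non-empty, $x_1$ is a limit of other points of it, so there is a second point $x_2\in E_C(f|_I)$ with $x_2\neq x_1$; put $\eta=|x_2-x_1|>0$. Finally, for each $i\in\{1,2\}$ the inequality $L(f|_I,x_i)>C$ means that, for every $\delta>0$, there is a point $y_i\in I$ with $0<|y_i-x_i|<\delta$ and $|f(y_i)-f(x_i)|>C|y_i-x_i|$. Choosing $\delta<\min(\eta/3,(b-a)/2)$ and letting $[c_i,d_i]$ be the closed interval with endpoints $x_i$ and $y_i$, each $[c_i,d_i]$ is a subinterval of $I$ of length $<(b-a)/2$ satisfying $|f(d_i)-f(c_i)|>C(d_i-c_i)$; and since $[c_i,d_i]$ lies within distance $\eta/3$ of $x_i$ while $|x_1-x_2|=\eta$, the two intervals are disjoint.

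I expect the only real obstacle to be the conceptual one flagged above: recognising that repeated bisection by itself delivers a single subinterval with the required increment, so that the disjointness of the two intervals has to be supplied by the absence of isolated points in $E_C(f|_I)$, which hands us two distinct base points to expand around. Once that is in place, the estimates controlling the lengths and the separation of the two intervals are routine.
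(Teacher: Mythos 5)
Your proof is correct, but it takes a genuinely different route from the paper's. Both arguments consume exactly the same two ingredients --- Proposition \ref{Lipschitz} and the fact that $E_C(f|_I)$ has no isolated points --- but in opposite logical directions. You argue directly: the contrapositive of Proposition \ref{Lipschitz} hands you a point $x_1\in E_C(f|_I)$, the no-isolated-points property hands you a second point $x_2$, and unwinding the $\limsup$ in the statement $L(f|_I,x_i)>C$ produces an arbitrarily short interval with the required increment anchored at each $x_i$, with disjointness bought by taking both intervals shorter than $|x_1-x_2|/3$. The paper argues by contradiction: repeated bisection yields a nested sequence $[a_n,b_n]$ of intervals satisfying the increment condition and shrinking to a point $x_0$; if no disjoint pair existed, every sufficiently short interval inside $I\setminus[a_n,b_n]$ would have to fail the increment condition, forcing $L(f|_I,x)\le C$ for all $x\ne x_0$; the no-isolated-points property then upgrades this to $L(f|_I,x)\le C$ everywhere, and Proposition \ref{Lipschitz} contradicts $|f(b)-f(a)|>C(b-a)$. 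Your version is more direct, makes transparent where the two intervals come from (two distinct points of the exceptional set), and proves slightly more, since the intervals can be taken arbitrarily short with an endpoint in $E_C(f|_I)$; the paper's version never needs to exhibit a point of $E_C(f|_I)$ explicitly, and its bisection sequence doubles as the template for the nested-interval Cantor-set construction in the main theorem. Note also that you were right to work with $E_C(f|_I)$ rather than $E_C(f)\cap I$ throughout: the paper makes the same choice and remarks afterwards that $a$ and $b$ could be isolated points of $E_C(f)\cap I$, which would break the no-isolated-points step.
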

\begin{proof}
The usual repeated bisection argument gives us a nested decreasing sequence of closed subintervals $[a_n,b_n]$ of $I$ such that, for all $n \in \N$, we have
$b_n-a_n=2^{-n}(b-a)$ and $|f(b_n)-f(a_n)|> C (b_n-a_n)$.
Let $x_0$ be the unique element of $\bigcap_{n \in \N} [a_n,b_n]$.

Suppose, for contradiction, that no suitable pair of closed subintervals $[c_1,d_1]$ and $[c_2,d_2]$ of $[a,b]$ exist satisfying the desired conditions.
Let $n \in \N$. Then, for every closed interval $[c,d]\subseteq I \setminus [a_n,b_n]$ with $d-c \leq (b-a)/2$, we must have
$|f(d)-f(c)| \leq C (d-c)$.
It follows that $L(f|_I,x) \leq C$ for all $x \in [a,b] \setminus [a_n,b_n]$.
Since this holds for all $n \in \N$, we have
$L(f|_I,x) \leq C$ for all $x \in [a,b] \setminus \{x_0\}$.
Since $E_C(f|_I)$ has no isolated points, we have $L(f|_I,x) \leq C$ for all $x \in [a,b]$. However Proposition \ref{Lipschitz} then tells us that $|f(b)-f(a)| \leq C (b-a)$, which is a contradiction. This proves the result.
\end{proof}

Recall that, by default, $L(f,x)$ and $E_C(f)$ are defined using our fixed interval $J$. In the above proof, we worked with $L(f|_I,x)$ and $E_C(f|_I)$ rather than with $L(f,x)$ and $E_C(f) \cap I$. Otherwise, the points $a$ and $b$ would need to be considered separately. For example, it is possible for either or both of $a$ and $b$ to be isolated points of $E_C(f)\cap I$.

\smallskip

We now have the tools we need to prove our main theorem.

\begin{theorem}
Let $C\geq 0$, and suppose that $E_C(f) \neq \emptyset$. Then $E_C(f)$ has a perfect subset.
\end{theorem}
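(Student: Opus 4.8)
The plan is to build a homeomorphic copy of the Cantor set inside $E_C(f)$ by iterating Lemma \ref{pre-Cantor}, starting from an interval anchored at a point of $E_C(f)$. The key difficulty, which I address first, is that Lemma \ref{pre-Cantor} only propagates the \emph{strict} inequality $|f(d)-f(c)|>C(d-c)$ down the tree, whereas Lemma \ref{nested} will only return $L(f,x)\ge C$ at the limit points of the construction, and this non-strict bound is not enough to conclude that $x\in E_C(f)$. To bridge this gap I would fix a point $x_0\in E_C(f)$ and choose a constant $C'$ with $C<C'<L(f,x_0)$, and then run the whole construction with $C'$ in place of $C$. Since the argument showing that $C$-exceptional sets have no isolated points works for any constant, $E_{C'}(f|_I)$ also has no isolated points, so Lemma \ref{pre-Cantor} applies verbatim with $C'$; the terminal estimate will then read $L(f,x)\ge C'>C$, which is exactly what is needed.

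Next I would produce a seed interval. As $L(f,x_0)>C'$, the definition of the pointwise Lipschitz constant (using the appropriate one-sided $\limsup$ when $x_0$ is an endpoint of $J$) yields a point $x_1\ne x_0$, as close to $x_0$ as we wish, with $|f(x_1)-f(x_0)|>C'\,|x_1-x_0|$; choosing $x_1$ near enough that the closed interval $I_0$ with endpoints $x_0$ and $x_1$ lies in $J$, we obtain a closed subinterval of $J$ whose endpoints $a,b$ satisfy $|f(b)-f(a)|>C'(b-a)$. Starting from $I_0$ and applying Lemma \ref{pre-Cantor} with constant $C'$ repeatedly, I would construct closed intervals $I_\sigma$ indexed by the finite binary strings $\sigma$: given $I_\sigma$, the lemma provides two disjoint closed subintervals $I_{\sigma 0}$ and $I_{\sigma 1}$ of $I_\sigma$, each of length at most half that of $I_\sigma$, and each still satisfying the strict inequality with $C'$. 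Routine bookkeeping then shows that the intervals at any fixed level are pairwise disjoint and that their lengths tend to $0$ as the length of $\sigma$ tends to infinity, so that
\[
K:=\bigcap_{n=1}^\infty\ \bigcup_{|\sigma|=n} I_\sigma
\]
is a homeomorphic copy of the Cantor set, and in particular a perfect subset of $J$.

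It then remains to check that $K\subseteq E_C(f)$. Each $x\in K$ corresponds to a unique infinite binary string $\omega$ with $\{x\}=\bigcap_n I_{\omega|n}$; writing $I_{\omega|n}=[a_n,b_n]$, we obtain a sequence of closed subintervals of $J$ with $\bigcap_n [a_n,b_n]=\{x\}$ and with $c_n:=|f(b_n)-f(a_n)|/(b_n-a_n)>C'$ for every $n$. Lemma \ref{nested} now gives $L(f,x)\ge \limsup_{n\to\infty} c_n\ge C'>C$, so $x\in E_C(f)$, and $K$ is the required perfect subset. I expect the only genuine obstacle to be the strict-inequality issue noted at the outset; once it is resolved by passing to the intermediate constant $C'$, the seed interval, the disjointness and shrinking of the tree, and the identification of $K$ with the Cantor set are all routine.
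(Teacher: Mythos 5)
Your proposal is correct and takes essentially the same route as the paper: you pass to an intermediate constant $C'$ strictly between $C$ and $L(f,x_0)$ (exactly the point the paper highlights in its remark after the theorem), build a Cantor set by iterating Lemma \ref{pre-Cantor} with $C'$, and conclude via Lemma \ref{nested} that every point of the resulting set satisfies $L(f,x)\geq C'>C$. The only difference is that you make explicit some details the paper compresses, namely the extraction of the seed interval from a point of $E_C(f)$ and the binary-tree bookkeeping behind the phrase \lq\lq repeated application.''
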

\begin{proof}
Since $E_C(f) \neq \emptyset$, there must be a subinterval $[a,b]$ of $J$ and $C'>C$ with
$|f(b)-f(a)|>C'(b-a)$.

By repeated application of Lemma \ref{pre-Cantor}, we may construct a homeomorphic copy of the Cantor set, say $X \subseteq J$, with the following property: for each $x \in X$, there is a nested decreasing sequence of closed subintervals $[a_n,b_n]$ of $J$ such that
$\bigcap_{n\in \N} [a_n,b_n] = \{x\}$ and, for each $n \in \N$, $|f(b_n)-f(a_n)|>C'(b_n-a_n)$.
By Lemma \ref{nested}, $L(f,x) \geq C'$ for all $x \in X$. Thus $X \subseteq E_C(f)$, and the result follows.
\end{proof}

In this proof it was important to apply the preceding lemmas to $C'$ rather than to $C$ in order to ensure that
the points of the resulting Cantor set are in $E_C(f)$.

\smallskip

We are grateful to the referee for some helpful comments and suggestions.

\bibliographystyle{amsalpha}

\end{document}